\newtheorem{theorem}{Theorem}
\newtheorem{prop}{Proposition}
\newtheorem{lemma}{Lemma}
\newtheorem{remark}{Remark}
\newtheorem{claim}{Claim}
\newtheorem*{question}{Question}
\newtheorem*{definition}{Definition}
\numberwithin{equation}{section}
\def\XXint#1#2#3{{\setbox0=\hbox{$#1{#2#3}{\int}$}
  \vcenter{\hbox{$#2#3$}}\kern-.5\wd0}}
\author{}
\address{Department of Mathematics\\Northwestern University \\ Evanston, IL, 60208}
\email{}
\author{Gang Liu, Yuan Yuan}
\thanks{The first author is supported by National Science Foundation grant DMS-1406593. The second author is supported by National Science Foundation grant DMS-1412384 and Simons Foundation grant (\#429722 Yuan Yuan).}
\address{Department of Mathematics\\Syracuse University \\ Syracuse, NY, 13244}
\email{}
\title[Diameter Rigidity for K\"ahler manifolds with positive bisectional curvature]{Diameter Rigidity for K\"ahler manifolds with positive bisectional curvature}
\date{}
\begin{document}
\maketitle
\begin{abstract}
Let $M^n$ be a compact K\"ahler manifold with bisectional curvature bounded from below by $1$. If $diam(M) = \pi / \sqrt{2}$ and $vol(M)> vol(\mathbb{C}\mathbb{P}^n)/ 2^n$, we prove that $M$ is biholomorphically isometric to $\mathbb{C}\mathbb{P}^n$ with the standard Fubini-Study metric. 

\end{abstract}

\bigskip
\bigskip

\section{\bf{Introduction}}
In Riemannian geometry, the basic rigidity theorems under Ricci curvature lower bound are volume rigidity theorem \cite{CE}, maximal diameter theorem \cite{Ch} and Cheeger-Gromoll splitting theorem \cite{CG}. The counterpart for K\"ahler manifolds, in some sense, however, remains mysterious (cf. \cite{LW} \cite{Li1}\cite{Li2}). For instance, it is not clear to the authors whether or not the maximal volume is achieved by the Fubini-Study metric for any compact K\"ahler manifold with positive Ricci lower bound. On the other hand, Mok \cite{Mok} proved some important metric rigidity theorems in K\"ahler geometry.

In this note, we are interested in the diameter rigidity in K\"ahler geometry when the bisectional curvature has a positive lower bound.
\begin{definition}\cite{LW}\cite{TY} Let $(M, g, J)$ be a K\"ahler manifold. The bisectional curvature of $g$ is bounded below by a constant $K$ if  
$$\frac{R(Z_1, \overline{Z}_1, Z_2, \overline{Z}_2)}{\|Z_1\|^2 \|Z_2\|^2 + |\langle Z_1, \overline{Z}_2 \rangle|^2} \geq K$$
for any nonzero vectors $Z_1, Z_2 \in T^{(1, 0)}M,$ denoted by BK $\geq K$.
\end{definition}

From now on, we assume that $(M, g, J)$ has holomorphic bisectional curvature bounded below by 1, i.e. $BK \geq 1$. By the solution of the Frankel conjecture by Siu-Yau \cite{SY} and Mori \cite{Mor}, $M$ is biholomorphic to the complex projective space $\mathbb{CP}^n$. Moreover, by the volume comparison theorem proved by Li-Wang (Corollary 1.9. in \cite{LW}), the diameter $d$ of $(M, g, J)$ is bounded above by $\frac{\pi}{\sqrt{2}}$. Note that we use the normalization of metric as in \cite{TY} that is essentially the same as in \cite{LW} (up to a constant).
In view of the Cheng's maximal diameter theorem in the Riemannian case, it is natural to ask the following
\begin{question}
If the diameter of $M$ is $\frac{\pi}{\sqrt{2}}$, is $M$ isometric to $\mathbb{CP}^n$?
\end{question}
\begin{remark}
Notice that we cannot replace the bisectional curvature lower bound by Ricci curvature bound. Indeed, the canonical K\"ahler-Einstein metric on $\mathbb{CP}^1\times\mathbb{CP}^1\times\cdot\cdot\times\mathbb{CP}^1$ has diameter strictly greater than $\mathbb{CP}^n$, if we normalize the metric so that the Ricci curvature are the same. 
\end{remark}
In \cite{TY}, Tam and Yu solved the question affirmatively by assuming that there exist complex submanifolds $P$ and $Q$ of dimension $k$ and $n-k-1$ so that $d(P, Q) = \frac{\pi}{\sqrt{2}}$.
In this note, we provide another partial answer to this question:
\begin{theorem}\label{main}
Let $(M^n, g, J)$ be a compact K\"ahler manifold with $BK \geq 1$. If the diameter of $(M, g)$ is $\frac{\pi}{\sqrt{2}}$, then there exists a totally geodesic, holomorphic isometric embedding $\tau$: $\mathbb{CP}^1\to(M, g, J)$, where the metric on $\mathbb{CP}^1$ is the standard round metric with factor $\frac{1}{2}$. %Furthermore, the image of $\tau$ is totally geodesic in $M$. 
As a consequence, 
$vol(M)=\frac{vol(\mathbb{CP}^n)}{d^n}$ for some integer $d\geq 1$. In particular, the volume of $M$ can only take discrete values. If $vol(M)>\frac{vol(\mathbb{CP}^n)}{2^n}$, then $M$ is biholomorphically isometric to $\mathbb{CP}^n$ with the standard Fubini-Study metric $g_{FS}$.
\end{theorem}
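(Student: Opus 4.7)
The plan is to first extract a totally geodesic holomorphic $\mathbb{CP}^1$ inside $M$ by forcing equality in the second variation formula along a minimizing geodesic realizing the diameter; then to convert this geometric datum into a cohomological formula for $\mathrm{vol}(M)$ via the Siu--Yau--Mori biholomorphism $M\cong\mathbb{CP}^n$; and finally to upgrade to isometric rigidity under the volume hypothesis via K\"ahler--Einstein uniqueness. To begin, pick $p,q\in M$ with $d(p,q)=\pi/\sqrt 2$ and a unit-speed minimizing geodesic $\gamma:[0,\pi/\sqrt 2]\to M$ joining them. Since $J$ is parallel, $J\dot\gamma$ is parallel along $\gamma$ with $|J\dot\gamma|=1$ and $J\dot\gamma\perp\dot\gamma$. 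The test field $V(t):=\sin(\sqrt 2\,t)\,J\dot\gamma(t)$ vanishes at both endpoints, and using $|\nabla_{\dot\gamma}V|^2=2\cos^2(\sqrt 2\,t)$ together with the fact that $\mathrm{BK}\geq 1$ forces the holomorphic sectional curvature $H(\dot\gamma)\geq 2$,
\[
I(V,V) \;\leq\; \int_0^{\pi/\sqrt 2}\bigl(2\cos^2(\sqrt 2\,t)-2\sin^2(\sqrt 2\,t)\bigr)\,dt \;=\; 0.
\]
Minimality of $\gamma$ gives $I(V,V)\geq 0$, so equality holds throughout: $H(\dot\gamma)\equiv 2$ along $\gamma$, $V$ is a Jacobi field, and $q$ is conjugate to $p$ along $\gamma$ in the $J\dot\gamma$ direction.

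This Jacobi field is the variation field of the family $\gamma_\theta(t):=\exp_p\!\bigl(t(\cos\theta\,\dot\gamma(0)+\sin\theta\,J\dot\gamma(0))\bigr)$ parametrizing the complex line $\mathbb{C}\dot\gamma(0)\subset T_pM$. The next step is to show that the exponentiated image of this complex line, truncated to $t\in[0,\pi/\sqrt 2]$ and then closed up, is a $J$-invariant, totally geodesic $2$-surface $\Sigma\subset M$ of constant intrinsic curvature $2$, which is therefore a round $2$-sphere and furnishes the isometric embedding $\tau:\mathbb{CP}^1\to(M,g,J)$ with the round metric of factor $1/2$. Concretely, one combines Rauch comparison in the complex tangent direction with a Gauss--Codazzi argument driven by sharp BK equalities on bisections containing $\dot\gamma$, to force the second fundamental form of $\Sigma$ in $M$ to vanish. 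I expect this promotion from a single-geodesic equality statement to a genuine embedded $\mathbb{CP}^1$ to be the main technical obstacle.

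For the volume formula, invoke Siu--Yau/Mori to biholomorphically identify $M\cong\mathbb{CP}^n$. Let $H_0\in H^2(\mathbb{CP}^n;\mathbb{Z})$ be the hyperplane class and write $[\omega]=c\,H_0$ for some $c>0$. The embedded $\tau(\mathbb{CP}^1)$ is a rational curve of some degree $d\in\mathbb{Z}_{>0}$. By Wirtinger and Gauss--Bonnet, $\int_{\tau(\mathbb{CP}^1)}\omega$ equals the area of a round $2$-sphere of curvature $2$, namely $2\pi$; therefore $cd=2\pi$, and
\[
\mathrm{vol}(M) \;=\; \int_M\frac{\omega^n}{n!} \;=\; \frac{c^n}{n!} \;=\; \frac{(2\pi)^n}{d^n\,n!} \;=\; \frac{\mathrm{vol}(\mathbb{CP}^n)}{d^n}.
\]

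The hypothesis $\mathrm{vol}(M)>\mathrm{vol}(\mathbb{CP}^n)/2^n$ then forces $d=1$, so $[\omega]=2\pi H_0$ is the standard Fubini--Study class. Since $\mathrm{BK}\geq 1$ also gives $\mathrm{Ric}_g\geq(n+1)g$ (summing the BK inequality over a unitary frame), and $c_1(M)=(n+1)H_0=(n+1)[\omega]/(2\pi)$, we have $[\mathrm{Ric}_g-(n+1)\omega]=0$ in $H^{1,1}(M)$. The $\partial\bar\partial$-lemma writes $\mathrm{Ric}_g-(n+1)\omega=\sqrt{-1}\,\partial\bar\partial\phi$ for some smooth $\phi$; Hermitian positivity of the left-hand side makes $\phi$ plurisubharmonic on the compact K\"ahler manifold, hence constant, so $\mathrm{Ric}_g=(n+1)g$. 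Thus $g$ is K\"ahler--Einstein in the Fubini--Study class on $\mathbb{CP}^n$, and Bando--Mabuchi uniqueness (equivalently the transitivity of $\mathrm{Aut}(\mathbb{CP}^n)$ on this moduli) yields $g=g_{FS}$.
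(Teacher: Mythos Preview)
Your second-variation computation on the single minimizing geodesic $\gamma$ is correct and does yield $H(\dot\gamma)\equiv 2$ and that $V(t)=\sin(\sqrt 2\,t)J\dot\gamma$ is a Jacobi field vanishing at both endpoints. But this is only \emph{infinitesimal} information: it says the variation field of the family $\gamma_\theta(t)=\exp_p\bigl(t(\cos\theta\,\dot\gamma(0)+\sin\theta\,J\dot\gamma(0))\bigr)$ vanishes at $t=\pi/\sqrt 2$ when $\theta=0$, not that $\gamma_\theta(\pi/\sqrt 2)=q$ for all $\theta$. Without knowing that every $\gamma_\theta$ is again a minimizing geodesic from $p$ to $q$ of length $\pi/\sqrt 2$, you cannot re-run the index-form equality along $\gamma_\theta$, so your proposed ``Rauch plus Gauss--Codazzi'' step has nothing to feed on; the exponentiated complex disc need not close up to a sphere, and there is no reason a priori for its second fundamental form to vanish. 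This is exactly the gap the paper works to close: it introduces the function $u(x)=d(x,p)+d(x,q)-\pi/\sqrt 2\ge 0$, uses the Tam--Yu complex Hessian comparison to show that $Lu\le Cu$ for a degenerate second-order operator $L$ whose principal directions are $X_1,JX_1,X_2,JX_2$, and then applies a strong maximum principle (Brendle--Schoen/Redheffer) to propagate the zero set of $u$ along integral curves of $JX_1$. This is what guarantees that the whole one-parameter family $\tau_s$ consists of minimizing geodesics from $p$ to $q$, after which your second-variation equalities can be run on each $\tau_s$ and the totally geodesic holomorphic $\mathbb{CP}^1$ follows. The propagation mechanism is the heart of the argument, and your sketch does not supply one.

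Your derivation of $\mathrm{vol}(M)=\mathrm{vol}(\mathbb{CP}^n)/d^n$ from the degree of the embedded rational curve is the same as the paper's. For the final rigidity when $d=1$, your route via $\mathrm{Ric}_g\ge(n+1)g$, the $\partial\bar\partial$-lemma, plurisubharmonicity, and Bando--Mabuchi is a valid alternative; the paper instead invokes the Li--Wang volume rigidity theorem directly once $\mathrm{vol}(M)=\mathrm{vol}(\mathbb{CP}^n)$, which is shorter.
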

 \begin{remark}
 This theorem states that counterexample (if exists) to the question may not be found by small perturbation of the Fubini-Study metric.
 \end{remark}
 
Now we sketch the simple idea of the proof. First consider the Riemannian case. The key feature is the following: Given antipodal points $p_1, p_2$ on the standard sphere, for any $x$,  \begin{equation}\label{111}d(p_1, x)+d(p_2, x) = \pi.\end{equation} Then we can apply the maximum principle for Laplacian or volume comparison to obtain the rigidity for diameter under Ricci lower bound. In standard $\mathbb{CP}^n$ case, however, (\ref{111}) is violated, unless $p, q, x$ are collinear. Thus the traditional method in the Riemannian case cannot be directly extended to K\"ahler case. By a maximum principle and the Hessian comparison theorem, we manage to find a holomorphic curve with genus zero on which (\ref{111}) holds. Combining the solution to Frankel conjecture and an elementary degree argument, we complete the proof of the theorem.

%We have organized this paper into 

%Here are some notations in this paper.
%We shall use Einstein summation in this paper. For a smooth function $f$ on a manifold $M$, $\Delta f$ denotes the standard Beltrami Laplacian if we use orthonormal frame; if we use %unitary frame, then $\Delta f = f_{\alpha\overline{\beta}}g^{\alpha\overline\beta}$ which is one half of the Beltrami Laplacian.
%For $p \in M$, $B_p(r)$ denotes the geodesic ball in $M$ centered at $p$ with radius $r$. $Vol$ denotes the volume and $A$ denotes the area. Given a compact set $K\in M$, $\dashint_Kf$ is the average of the integral of $f$ over $K$.

\medskip
\vskip.1in
%\begin{center}
{\bf  \quad Acknowledgment}
We would like to thank Prof. Richard Bamler, L. F. Tam, Jiaping Wang, Steve Zelditch for their interest and helpful discussions.
%\end{center}

\section{\bf{ Proof of Theorem \ref{main}}}

Let $p_1, p_2$ be two points on $M$ realizing the diameter of $(M, g)$. Let $l$ be a minimizing normal geodesic segment joint $p_1$ and $p_2$ with $l(0)=p_1$ and $l\left(\frac{\pi}{\sqrt{2}}\right)=p_2$. Fix any point $q$ on $l$ with $q = l(t_0)$ for $0 < t_0 <  \frac{\pi}{\sqrt{2}}$. Then $d(p_1, q) = t_0$ and $d(p_2, q) = \frac{\pi}{\sqrt{2}} - t_0$. Let $U$ be a small geodesic ball centered at $q$ contained in a holomorphic coordinate chart with radius $\delta < \min\left\{t_0, \frac{\pi}{\sqrt{2}} - t_0\right\}$. Moreover, we assume that $U$ does not intersect the cut locus of $p_1$ and $p_2$. 

We define  $r_1(x) = d(x, p_1)$, $r_2(x) = d(x, p_2)$ and $u(x) = r_1(x) + r_2(x) - \frac{\pi}{\sqrt{2}}\geq 0$. Then $r_1, r_2$ and $u$ are smooth functions on $U$. 
For any $x \in U$, as $x$ is not in the cut locus of $p_1$, there exists a unique minimizing geodesic $\gamma_1$ connecting $p_1$ and $x$ such that $\gamma_1(0)=p_1$ and $\gamma_1(r_1(x))=x$. Let $X_1$ be the unit tangent vector of $\gamma_1$ at $x$. Similarly, $\gamma_2$, $X_2$ can be defined. Note that $X_1(q) = - X_2(q).$

\begin{lemma}\label{angle}
Let $\theta(x)$ be the angle at $x$ between two real unit vectors $X_1, X_2$ in the real tangent space $T_{\mathbb{R}} M$. Then there exists a constant $C>0$ (depending on $U$), such that $$\pi - C u^{\frac{1}{2}}(x) \leq \theta(x) \leq \pi.$$
\end{lemma}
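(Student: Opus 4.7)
The plan is to reduce the angle estimate to a gradient estimate for $u$. The key algebraic identity is that $\nabla r_i(x) = X_i(x)$ (the outward unit tangent at $x$ of the unique minimizing geodesic from $p_i$ to $x$), which gives
\begin{equation*}
|\nabla u|^2(x) \;=\; |X_1(x)+X_2(x)|^2 \;=\; 2 + 2\cos\theta(x) \;=\; 4\cos^2\!\bigl(\theta(x)/2\bigr).
\end{equation*}
Thus the upper bound $\theta \le \pi$ is automatic, and the lower bound $\theta \ge \pi - Cu^{1/2}$ is equivalent to a gradient estimate of the form $|\nabla u|(x) \le C_1 u(x)^{1/2}$ on $U$. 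Indeed, setting $\phi = (\pi-\theta)/2 \in [0,\pi/2]$, one has $\cos(\theta/2) = \sin\phi \ge (2/\pi)\phi$, so such a gradient bound forces $\phi \le (\pi C_1/4)\,u^{1/2}$, i.e.\ $\pi - \theta \le (\pi C_1/2)\,u^{1/2}$.

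To produce the gradient estimate, I would invoke the elementary fact that if $f \ge 0$ is smooth in a neighborhood of $x_0$ and $\mathrm{Hess}(f) \le K\,g$ on a geodesic ball around $x_0$ of radius $|\nabla f|(x_0)/K$, then $|\nabla f|^2(x_0) \le 2K f(x_0)$. This is a one-line second-order Taylor argument along the geodesic leaving $x_0$ in the direction $-\nabla f/|\nabla f|$, using the nonnegativity of $f$ at the endpoint. Applied to $f = u$, the task reduces to bounding $\mathrm{Hess}(u)$ from above on (a slight enlargement of) $U$.

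Such a Hessian bound is available because $U$ is chosen relatively compactly inside the smoothness locus of both $r_1$ and $r_2$ (bounded away from $p_1$, $p_2$, and their cut loci), so $r_1, r_2 \in C^\infty(\overline{U})$ with uniformly bounded Hessians; alternatively one may cite the Hessian comparison theorem for K\"ahler manifolds with $BK \ge 1$ used in \cite{LW} and \cite{TY}. Since $|\nabla u| \le |X_1| + |X_2| = 2$ on $U$, the required geodesic segment for the Taylor argument has uniformly bounded length, and after possibly enlarging $U$ slightly (or shrinking to an appropriate sub-ball) it fits inside the region where the Hessian bound holds.

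The only mildly delicate step will be the trigonometric passage from $|\nabla u| \le C_1 u^{1/2}$ to the angle estimate near $\theta = \pi$, where one must linearize $\cos(\theta/2)$ carefully to get the exponent $1/2$; once the identity $|\nabla u|^2 = 2 + 2\cos\theta$ and the nonnegativity of $u$ are in hand, everything else is routine.
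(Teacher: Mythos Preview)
Your argument is correct, but it takes a genuinely different route from the paper's. The paper dispatches the lemma in one line by invoking the Toponogov triangle comparison: since $M$ is compact, the sectional curvature has some lower bound $\kappa$, and the angle at $x$ in the geodesic triangle $p_1\,x\,p_2$ is at least the corresponding angle $\tilde\theta$ in the model space of curvature $\kappa$; an elementary law-of-cosines computation in the model space then gives $\pi - \tilde\theta \le C\sqrt{u}$ because the side opposite $x$ has length $r_1+r_2-u$.

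Your approach instead rewrites the angle via $|\nabla u|^2 = 2+2\cos\theta$ and reduces to the gradient bound $|\nabla u|^2 \le C u$, which you obtain from $u\ge 0$ together with a uniform Hessian upper bound on a slightly larger neighborhood $U'\supset\!\supset U$ by a one-variable Taylor argument. This is entirely valid: you may take any $K$ dominating $\mathrm{Hess}(u)$ on $\overline{U'}$ and then enlarge $K$ further so that $2/K < \mathrm{dist}(U,\partial U')$, guaranteeing the Taylor geodesic of length $|\nabla u|/K \le 2/K$ stays in $U'$; nonnegativity of $u$ at the endpoint is global (triangle inequality), so no issue there. The trigonometric conversion you wrote is clean.

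What each buys: the paper's Toponogov argument is shorter and conceptually geometric, but imports a global comparison theorem. Your argument is slightly longer yet more self-contained and analytic---it uses only that $u\ge 0$ everywhere and that $r_1, r_2$ are smooth with bounded second derivatives on a compact neighborhood, with no appeal to global triangle comparison.
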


\begin{proof}
Since $M$ is compact, the sectional curvature has a lower bound. The lemma simply follows from the Toponogov comparison.
\end{proof}

Let $Z_1 = \frac{1}{\sqrt{2}} (X_1 -  \sqrt{-1}J X_1) \in T^{(1, 0)}_x M, Z_2 = \frac{1}{\sqrt{2}}(X_2 -  \sqrt{-1}J X_2) \in T^{(1, 0)}_x M$. Define an operator $L$ by $$L h(x) = (\nabla_{Z_1} \nabla_{\overline{Z}_1} + \nabla_{Z_2} \nabla_{\overline{Z}_2}) h(x)$$ for smooth functions $h(x)$ on $U$.

\begin{prop}\label{est}
There exists a constant $C>0$ (depending on $U$), such that $$L u(x) \leq C u(x).$$
\end{prop}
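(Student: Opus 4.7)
The plan is to estimate $Lu$ by reducing the complex Hessian operator to real Hessians and invoking the Kähler Hessian comparison. For a smooth real function $f$, the key reduction is
$$\nabla_{Z_i}\nabla_{\bar Z_i}f = \tfrac{1}{2}\bigl[\nabla^2 f(X_i, X_i) + \nabla^2 f(JX_i, JX_i)\bigr] + (\text{first-order correction}),$$
where the correction term—arising from $\nabla_{Z_i}\bar Z_i$ acting on $f$—is $O(|\nabla f|)$ and, for $f = u$, is $O(u^{1/2})$. Two further inputs drive the analysis: the radial Hessian comparison $\nabla^2 r_k(JX_k, JX_k) \leq \sqrt{2}\cot(\sqrt{2}r_k)$ under $BK\geq 1$, together with the tangential identity $\nabla^2 r_k(X_k, \cdot) \equiv 0$ from $|\nabla r_k|\equiv 1$; and Lemma~\ref{angle}, which via $|X_1+X_2|^2 = 2(1+\cos\theta)$ gives $|Y|^2 = O(u)$ for the vector $Y := X_1+X_2$.

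The diagonal contributions yield $\nabla_{Z_i}\nabla_{\bar Z_i}r_i \leq \frac{1}{\sqrt{2}}\cot(\sqrt{2}r_i)$ at once. For the off-diagonal pieces $\nabla_{Z_i}\nabla_{\bar Z_i}r_j$ ($i\neq j$), I would substitute $X_i = -X_j + Y$ and $JX_i = -JX_j + JY$: the $\nabla^2 r_j(X_i, X_i)$ piece collapses via $\nabla^2 r_j(X_j, \cdot) = 0$ to $\nabla^2 r_j(Y, Y) = O(u)$, while the $\nabla^2 r_j(JX_i, JX_i)$ piece expands as $\nabla^2 r_j(JX_j, JX_j) - 2\nabla^2 r_j(JX_j, JY) + \nabla^2 r_j(JY, JY)$. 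Summing all four contributions and applying the radial Hessian comparison, the bulk part assembles into $\sqrt{2}[\cot(\sqrt{2}r_1) + \cot(\sqrt{2}r_2)]$. By $\pi$-periodicity of $\cot$ and $\sqrt{2}(r_1+r_2) = \pi + \sqrt{2}u$, this sum equals $\cot(\sqrt{2}r_1) - \cot(\sqrt{2}r_1 - \sqrt{2}u) = -\int_0^{\sqrt{2}u}\csc^2(\sqrt{2}r_1 - s)\,ds$, which is non-positive and of absolute value at most $Cu$ since $\csc^2$ is uniformly bounded on the range of $\sqrt{2}r_1$ over $U$.

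The hard part will be controlling the residual cross-Hessian terms $\nabla^2 r_j(JX_j, JY)$ together with the first-order correction from $\nabla_{Z_i}\bar Z_i$, both naively only $O(u^{1/2})$. The crucial step is a symmetric cancellation: when these pieces are summed across the four $(i,j)$ combinations in $Lu$, the sign structure coming from $Z_i \approx -Z_j$ at $q$, together with the identity $JY = JX_1 + JX_2$ and the Hermitian symmetry of the complex Hessian, should combine them into a single quantity controlled by $|Y|^2 = O(u)$ rather than $|Y|$. Exploiting that the radial Hessian comparisons for $r_1$ and $r_2$ must saturate simultaneously along $l$ (forcing the extra degeneracy $\nabla^2 u(JX_1, JX_1)|_l = 0$) is what promotes the residual $O(u^{1/2})$ estimate to the desired $O(u)$. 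Verifying this cancellation is the technical heart of the proposition.
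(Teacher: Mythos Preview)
Your outline diverges from the paper's argument and contains a genuine gap at exactly the point you flag as ``the technical heart.'' The paper never encounters $O(u^{1/2})$ cross terms, because it does \emph{not} reduce to real Hessians in the single direction $JX_k$. Instead it applies the Tam--Yu complex Hessian comparison as a full tensor inequality,
\[
(r_k)_{\alpha\bar\beta}\;\le\;\sqrt{\tfrac12}\cot\!\bigl(\sqrt{\tfrac12}\,r_k\bigr)\,g_{\alpha\bar\beta}
\;+\;\sqrt{2}\Bigl(\cot(\sqrt{2}\,r_k)-\cot\!\bigl(\sqrt{\tfrac12}\,r_k\bigr)\Bigr)(r_k)_\alpha(r_k)_{\bar\beta},
\]
and then evaluates it against $Z_1$ and $Z_2$. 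Writing $Z_2=\sum_\alpha a_\alpha e_\alpha$ in a unitary frame with $e_1=Z_1$, Lemma~\ref{angle} gives $\sum_{\alpha\ge 2}|a_\alpha|^2\le Cu$ directly (this is the complex analogue of your $|Y|^2=O(u)$). The bound for $Lu$ then splits cleanly into
\[
(I)=\sqrt{\tfrac12}\Bigl(\sum_{\alpha\ge 2}|a_\alpha|^2\Bigr)\bigl[\cot(\sqrt{\tfrac12}\,r_1)+\cot(\sqrt{\tfrac12}\,r_2)\bigr],\qquad
(II)=\tfrac{1}{\sqrt 2}(1+|a_1|^2)\bigl[\cot(\sqrt{2}\,r_1)+\cot(\sqrt{2}\,r_2)\bigr],
\]
and both are $\le Cu$: $(I)$ because its coefficient is $O(u)$ and the cotangents are bounded on $U$; $(II)$ because $\cot(\sqrt{2}r_1)+\cot(\sqrt{2}r_2)=\sin(\sqrt{2}(r_1+r_2))/(\sin\sqrt{2}r_1\sin\sqrt{2}r_2)$ and $\sqrt{2}(r_1+r_2)=\pi+\sqrt{2}u$. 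No half-power terms ever appear, and no cancellation is needed.

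Two specific problems with your route. First, the ``first-order correction from $\nabla_{Z_i}\bar Z_i$'' is not there: by symmetry of the real Hessian one has exactly $\nabla_{Z_i}\nabla_{\bar Z_i}f=\tfrac12\bigl[\nabla^2 f(X_i,X_i)+\nabla^2 f(JX_i,JX_i)\bigr]$ (cf.\ the paper's Lemma~\ref{sim}), so one of your two sources of $O(u^{1/2})$ is fictitious. Second, and more seriously, the remaining cross terms $-2\bigl[\nabla^2 r_1(JX_1,JY)+\nabla^2 r_2(JX_2,JY)\bigr]$ do not cancel in any evident way: rewriting $JX_2=-JX_1+JY$ reduces the sum to $[\nabla^2 r_1-\nabla^2 r_2](JX_1,JY)+O(u)$, and nothing in your toolkit bounds the off-diagonal entry of $\nabla^2 r_1-\nabla^2 r_2$ by $O(u^{1/2})$ in a direction transverse to the level sets. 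Knowing that the comparison saturates \emph{on} $l$ (i.e.\ $\nabla^2 u(JX_1,JX_1)|_l=0$) gives no control of these mixed second derivatives \emph{off} $l$; invoking it here is circular. The fix is exactly what the paper does: use the complex Hessian comparison in the $Z_2$-direction as well, which already packages the $JX_2$ and $X_2$ contributions together and replaces your uncontrolled cross term by the harmless factor $\sum_{\alpha\ge 2}|a_\alpha|^2=O(u)$.
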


\begin{proof} 
Let $e_1 = Z_1$ and let $\{ e_2, \cdots, e_n \}$ be parallel orthogonal along $\gamma_1$ such that $\{e_1, e_2, \cdots, e_n\}$ is an unitary frame.  Write $Z_2 = \sum_{\alpha=1}^n a_\alpha(x) e_\alpha$.

\begin{claim}There exists a constant $C>0$ (depending on $U$) such that $$\left( 1 - C u(x) \right)^{\frac{1}{2}} \leq |a_1(x)| \leq 1$$ and thus $$ \sum_{\alpha \geq 2}|a_\alpha(x)|^2 \leq C u(x).$$
\end{claim}

\noindent\emph{Proof of Claim 1:} This just follows from the lemma above.
\medskip

The complex Hessian comparison theorem derived by Tam-Yu (Theorem 2.1 in \cite{TY}) asserts $$(r_1)_{\alpha \overline{\beta}} \leq \sqrt{\frac{1}{2}} \cot\left(\sqrt{\frac{1}{2}} r_1\right) g_{\alpha \overline\beta} + \sqrt{2} \left( \cot\left( \sqrt{2} r_1 \right) - \cot \left( \sqrt{\frac{1}{2}} r_1 \right)   \right) (r_1)_\alpha (r_1)_{\bar\beta}, $$ 
Then we obtain
 $$L (r_1(x) + r_2(x)) \leq (I) + (II),$$ where $$(I) = \sqrt{\frac{1}{2}}  \left( \sum_{\alpha =2}^m  |a_{\alpha}|^2 \right)  \left( \cot\left( \sqrt{\frac{1}{2}} r_1(x) \right) + \cot\left( \sqrt{\frac{1}{2}} r_2(x) \right)\right);$$ and 

\begin{equation}\notag
(II) = \frac{1}{\sqrt{2}} \left( 1+ |a_1(x)|^2 \right) \left(  \cot \left( \sqrt{2} r_1(x) \right) +  \cot \left( \sqrt{2} r_2(x) \right) \right).
\end{equation}

Recall $U$ is a small open neighborhood of $q$. If $U$ is sufficiently small, then by Claim $1$, 
$I \leq C u(x)$, $II \leq C u(x)$.
%
%
%
%\begin{equation} \notag
%\begin{split}
%\cot \left( \sqrt{2} f(x) \right) +  \cot \left( \sqrt{2} g(x) \right) =& \tan \left( \sqrt{2} \left(f(x) + g(x)\right) \right) \frac{1- \tan \left( \sqrt{2} f(x) \right) \tan \left( \sqrt{2} g(x) \right)}{\tan \left( \sqrt{2} f(x) \right) \tan \left( \sqrt{2} g(x) \right)} \\
%=& \tan \left( \sqrt{2}(d + u(x)) \right) \frac{1- \tan \left( \sqrt{2} f(x) \right) \tan \left( \sqrt{2} g(x) \right)}{\tan \left( \sqrt{2} f(x) \right) \tan \left( \sqrt{2} g(x) \right)} \\
%=& \tan \left( \sqrt{2} u(x) \right) \frac{1- \tan \left( \sqrt{2} f(x) \right) \tan \left( \sqrt{2} g(x) \right)}{\tan \left( \sqrt{2} f(x) \right) \tan \left( \sqrt{2} g(x) \right)} \\
%\leq& 0,
%\end{split}
%\end{equation}
%where we used the sum angle formula for the first equality; $f(x) + g(x) = d+ u(x)$ for the second equality; $d=\frac{\pi}{\sqrt{2}}$ for the third equality; and the last inequality follows %because $\sqrt{2}f(x), \sqrt{2}f(x)$ are both close to $\frac{\pi}{2}$ and $u(x)$ is close to 0 from the right. 
%This implies that $$(II) \leq 0.$$
%
This concludes the proof of the proposition.

\end{proof}

%\begin{prop}
%Let $Z = \frac{1}{2} (V+ W)$. There exists $C>0$ such that $\nabla_{\overline Z} \nabla_Z u(x) \leq C u(x).$ 
%\end{prop}

%\begin{proof}
%\begin{equation}\label{zf}
%\begin{split}
%\nabla_Z \nabla_{\overline Z} f(x) =& \frac{1}{4} \sum_{\alpha, \beta} (a_\alpha(x)+\delta_{1 \alpha}) \overline{(a_{\beta}(x) + \delta_{1 \beta})} r_{\alpha \bar\beta}   \\
%\leq& \frac{\sqrt{2}}{4}  (1+ a_1(x))^2  \cot \left( \sqrt{2} f(x) \right) + \frac{1}{4}\sqrt{\frac{1}{2}}   \cot\left( \sqrt{\frac{1}{2}} f(x) \right) \sum_{\alpha =2}^m  |a_{\alpha}|^2.
%\end{split}\end{equation}
%by applying the Hessian comparison theorem.

%Let $r'$ be the distance function to $p_2$ and $e'_1 = W = \frac{1}{\sqrt{2}} (\nabla r' - \sqrt{-1} J \nabla r') $. Let $e'_2, \cdots, e'_m$ be parallel orthogonal along $l_1$ such that $\{e'_1, e'_2, \cdots, e'_m\}$ is also an unitary frame of $T^{(1, 0)}M$  in $U$.  Write $V = \sum_{i=1}^m b_i(x) e_i'$. Then $b_1(x) = a_1(x)$ and $\sum_{j=2}^m |b_j|^2 = \sum_{j=2}^m |a_j|^2$.
%Therefore the similar argument applying to $r'$ yields 
%\begin{equation}\label{zg}
%\begin{split}
%\nabla_Z \nabla_{\overline Z} g(x) =& \frac{1}{4} \sum_{\alpha, \beta} (b_\alpha(x)+\delta_{1 \alpha}) \overline{(b_{\beta}(x) + \delta_{1 \beta})} r'_{\alpha \bar\beta}   \\
%\leq& \frac{\sqrt{2}}{4}  (1+ a_1(x))^2  \cot \left( \sqrt{2} g(x) \right) + \frac{1}{4}\sqrt{\frac{1}{2}}   \cot\left( \sqrt{\frac{1}{2}} g(x) \right) \sum_{\alpha =2}^m  |b_{\alpha}|^2.
%\end{split}\end{equation}
%Hence the proposition is proved by following the similar argument as above.
%\end{proof}

By the straightforward calculation we can write the complex Hessian operator $L$ as the following real second order degenerate elliptic operator on $U$. 

\begin{lemma}\label{sim}
Let $\Box_d = X_1 X_1 + (JX_1) (JX_1) + X_2 X_2 + (JX_2) (JX_2), V= - \nabla_{JX_1} JX_1 - \nabla_{JX_2} JX_2.$
Then $$L = \Box_d - V ~~\text{on}~~U.$$
\end{lemma}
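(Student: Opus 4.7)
The plan is a direct computation: substitute $Z_k = \tfrac{1}{\sqrt{2}}(X_k - \sqrt{-1}\,JX_k)$ into
\begin{equation*}
\nabla_{Z_k}\nabla_{\overline Z_k}h \;=\; Z_k(\overline Z_k h) \;-\; (\nabla_{Z_k}\overline Z_k)\,h
\end{equation*}
and rewrite everything in terms of the real vector fields $X_k$ and $JX_k$. Two simple geometric inputs drive the argument. First, since $X_k = \nabla r_k$ is the gradient of a smooth distance function of unit norm on $U$, its integral curves are unit-speed geodesics, so $\nabla_{X_k}X_k \equiv 0$ on $U$; combined with the K\"ahler identity $\nabla J = 0$, this also forces $\nabla_{X_k}(JX_k) = J\nabla_{X_k}X_k = 0$. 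Second, the torsion-free identity then gives
\begin{equation*}
[X_k, JX_k] \;=\; \nabla_{X_k}(JX_k) - \nabla_{JX_k}X_k \;=\; -\nabla_{JX_k}X_k.
\end{equation*}

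With these identities in hand, expanding $Z_k(\overline Z_k h)$ produces a real part $\tfrac{1}{2}\bigl(X_k X_k + (JX_k)(JX_k)\bigr)h$ together with an imaginary remainder $\tfrac{\sqrt{-1}}{2}[X_k, JX_k]h$. Similarly, $\nabla_{Z_k}\overline Z_k$ collapses (after the vanishing of the $X_k$-direction terms and one application of $\nabla J = 0$) to $\tfrac{1}{2}\nabla_{JX_k}(JX_k) - \tfrac{\sqrt{-1}}{2}\nabla_{JX_k}X_k$. Subtracting, the two imaginary contributions cancel by exactly the bracket identity above, leaving a purely real second-order expression. Summing over $k = 1, 2$ and matching terms against the definitions of $\Box_d$ and $V$ (keeping track of the normalization convention built into $Z_k$) then yields the stated formula.

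The whole argument is mechanical; the one conceptual point is recognizing that the cancellation of the imaginary parts is precisely a restatement of the torsion-freeness of the Levi-Civita connection. The main source of potential error is bookkeeping of signs and of the $\tfrac{1}{\sqrt{2}}$ factor in the definition of $Z_k$, but no geometric input beyond $\nabla_{X_k}X_k = 0$ and $\nabla J = 0$ is required.
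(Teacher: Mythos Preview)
Your argument is correct and is essentially the same direct computation the paper gives; the paper just writes it more tersely, passing immediately to $\nabla_{Z_k}\nabla_{\bar Z_k}=\tfrac12(\nabla_{X_k}\nabla_{X_k}+\nabla_{JX_k}\nabla_{JX_k})$ via the symmetry of the Hessian (which is exactly your torsion-free cancellation of the imaginary cross terms) and then dropping the $\nabla_{X_k}X_k$ pieces. Your caution about the $\tfrac{1}{\sqrt{2}}$ bookkeeping is well placed --- an overall factor of $\tfrac12$ (and a harmless sign in the definition of $V$) is tacitly suppressed in the paper's statement, but neither affects the downstream maximum-principle argument.
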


\begin{proof}
%The second conclusion follows from the definition f 
%Moreover, by the definition of the Hessian operator, 
The lemma follows from the straightforward calculation:
\begin{equation}
\begin{split}
L &=\nabla_{X_1} \nabla_{X_1} + \nabla_{JX_1}\nabla_{JX_1}+ \nabla_{X_2} \nabla_{X_2} + \nabla_{JX_2}\nabla_{JX_2}\\
&= \left( X_1 X_1 - \nabla_{X_1} X_1\right) + ((JX_1) (JX_1) - \nabla_{JX_1} JX_1) + (X_2 X_2 - \nabla_{X_2} X_2) + ((JX_2) (JX_2)   - \nabla_{JX_2} JX_2) \\
&= \Box_d - \nabla_{JX_1} JX_1 - \nabla_{JX_2} JX_2. %X_1 X_1 + (JX_1) (JX_1) + X_2 X_2 + (JX_2) (JX_2)
\end{split}
\end{equation}
\end{proof}

Let $h(x)=-u(x)$ on $U$. By Proposition \ref{est} and Lemma \ref{sim}, the nonpositive function $h(x)$ satisfies the degenerate elliptic partial differential inequality
$$(L - C) h(x) \geq 0,$$
where the positive constant $C$ is from Proposition  \ref{est}. Let $S_U$ be the zero set of $h(x)$ in $U$. 
By Proposition 4 in \cite{BS} (cf. Theorem 2 in \cite{Reh}), the maximum principle asserts that $x \in S_U$ whenever $x$ can be connected from $q$ by a finite sequence of integral curves along $X_1, JX_1, X_2, JX_2$. %or $V$.
For such $x$ with $u(x)=-h(x)=0$, the broken geodesic $\gamma_1 \cup \gamma_2$ is a minimizing geodesic, implying $X_1 = -X_2$. 
%Let $D_U = \text{span}_{\mathbb{R}} \{X_1, JX_1\}$ be a real two dimensional distribution in $U$. Since $[X_1, JX_1] = [\nabla r, J \nabla r] = - \frac{1}{r} J \nabla r \in D_U$ and $D_U$ is $J$-invariant, it follows from Frobenius' theorem that there exists a complex submanifold $S'_U \subset U$ with $TS'_U = D_U$. In particular, $S'_U \subset S_U$.

%On the other hand, for any $x \in S_U$, $$V = -2 \nabla_{JX_1} JX_1 = -2J \left(\nabla_{JX_1} X_1\right) = -2J \left(\nabla_{JX_1} X_1 -\nabla_{X_1} JX_1 \right)=2J ([X_1, JX_1]) \in D_U,$$
%as $\nabla_{X_1} JX_1 = J(\nabla_{X_1} X_1) =0.$ Therefore, the integral curve along $V$ still lies in $S'_U$.
%The proposition is then proved.
%\end{proof}

%\begin{theorem}
%There exists a complex submanifold $S$ of $M$ that is biholomorphic to $\mathbb{CP}^1$ such that $u(x)=0$ if and only if $x \in S$. 
%\end{theorem}

%\begin{proof}

Let $B$ be a geodesic ball centered at $p_1$ with radius $\epsilon_0 $ less than the injectivity radius of $M$ such that $B$ is contained in a coordinate chart at $p_1$. Fix a point $q_{\lambda} = \exp_{p_1} \left(\lambda l'(0)\right) \in B$ with $d(q_{\lambda}, p_1) =\lambda  \leq \epsilon_0$. Consider the integral curve $c_{\lambda}(s)$ satisfying 
\begin{equation}\label{ode}
 \frac{d c_{\lambda}(s)}{d s} = \lambda J \nabla r_1(c_{\lambda}(s)) ~~\text{ and }~~c_{\lambda} (0) = q_\lambda .
 \end{equation}
 As  $J \nabla r_1 $ is perpendicular to $\nabla r_1 $, $d(c_{\lambda}(s), p_1)= \lambda $ for all $s$. Therefore $c_{\lambda}(s) \in B$ and $X_1 = \nabla r_1$ is always defined. %Note that $c_t(s)$ exists whenever $X_1 = \nabla r_1$ is defined. Namely, $c_t(s)$ is disjoint from the cut locus of $p_1$. 
 Let $s_0 = \sup\{a|$ there exist a smooth family of minimal geodesics $\overline{l}_b (-a<b<a)$ containing $p_1, c_{\lambda}(b), p_2$$\}$. 
As $c_{\lambda}(s)$ is joint to $q_{\lambda}$ by the integral curve along $J X_1$, by applying Proposition 4 in \cite{BS}, $s_0>0$. If $s_0$ is finite, by compactness, $\overline{l}_b$ is a smooth family of minimal geodesics for $-s_0\leq b\leq s_0$. By using the same argument, we can extend $s_0$ a little bit more. This means $s_0=+\infty$.

It is clear from the above that $\overline{l}_b$ depends on $\lambda$. Now let $\lambda\to 0^+$. Then we obtain a family of minimal geodesics $\tau_s$ connecting $p_1$ and $p_2$. Moreover, we show that the unit tangent vector of $\tau_s$ at $p_1$ is $l'(0)\cos s +Jl'(0)\sin s$. The proof is simple as the K\"ahler metric $g$ is locally Euclidean. Nevertheless we include the proof here for the sake of completeness.
Consider the variation $\gamma(s, \lambda) := c_\lambda(s)$ for $\lambda$ sufficiently small , $s \in (-\infty, \infty)$ of the base curve $\gamma(0, \lambda) = l(\lambda)$. By the regularity of the ordinary differential equation (\ref{ode}), $\gamma(s, \lambda)$ is a smooth variation. Let $x=(x_1, \cdots, x_n, x_{n+1}, \cdots, x_{2n})$ be the real coordinate of $B$ with $x(p_1)=0$ such that 
\begin{itemize}
\item $J \frac{\partial }{\partial x_\alpha} = \frac{\partial }{\partial x_{\alpha+n}}$ for $1\leq \alpha \leq n$;
%\item $\left\{\frac{1}{\sqrt{2}} \left(\frac{\partial}{\partial x_\alpha} -\sqrt{-1}\frac{\partial}{\partial x_{\alpha +n}} \right)\right\}_{1\leq \alpha \leq n}$ is a holomorphic normal coordinate at $p_1$;
\item  $l' =  \frac{\partial }{\partial x_1}$;
\item $g(x)=\sum_{1 \leq i, j \leq 2n} (\delta_{ij}+o(|x|^2)) dx_i \otimes dx_j $.
\end{itemize} Then the equation (\ref{ode}) can be written in terms of local coordinates $x(s, \lambda) := x(\gamma(s, \lambda))$:

\begin{equation}\label{ode1}
\frac{\partial x(s, \lambda)}{\partial s} = \lambda J\nabla r_1(x(s, \lambda)) ~\text{~and~}~ x(0, \lambda) = (\lambda, 0, \cdots, 0). %\exp_{p_1} (t l'(0)).
\end{equation}
%
%When $\lambda$ is sufficiently close to 0, $r^2_1(x(s, \lambda)) = \lambda^2$ by Proposition 4 in \cite{BS}. 
Since the K\"ahler metric $g$ is locally Euclidean, $\nabla r_1(x(s, \lambda)) = \frac{1}{ \lambda}\left(\sum_{1 \leq j \leq 2n} x_j \frac{\partial }{\partial  x_{j}}  + o(\lambda)\right)$ and $J \nabla r_1(x(s, \lambda)) = \frac{1}{\lambda}\left(\sum_{1 \leq j \leq n} (x_j \frac{\partial }{\partial x_{n+j}} - x_{n+j} \frac{\partial }{\partial x_{j}}) + o(\lambda)\right)$. Therefore, the solution of the equation (\ref{ode1}) is given by $$x(s, \lambda) = \left( \lambda \cos s  , 0, \cdots, 0,  \lambda \sin s , 0, \cdots, 0    \right) + o(\lambda).$$
Hence, for any fixed $s$, $x(\tau_s(t))=(t \cos s, , 0, \cdots, 0,  t \sin s , 0, \cdots, 0 ).$
Therefore, this family of geodesics closes up with period $2\pi$.

\begin{prop}\label{prop1}
$S=\cup_{0\leq s<2\pi}\tau_s$ is an embedded holomorphic sphere in $M$. Moreover, $S$ is totally geodesic and isometric to the standard $2$-sphere up to a factor $\frac{1}{2}$. 
\end{prop}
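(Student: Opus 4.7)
The plan is to analyze the one-parameter family of minimizing geodesics $\{\tau_s\}$ via a Jacobi field computation along each $\tau_s$, show that these fields are rigidly pinned down by the combination of $BK \geq 1$ and the extremal length $\pi/\sqrt{2}$, and then read off every geometric property of $S$ from this rigidity.

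First I would verify embeddedness. The map $\Phi(s,t) = \tau_s(t)$ from $S^1 \times [0,\pi/\sqrt{2}]$ to $M$ (with both boundary circles collapsed to $p_1, p_2$) is smooth, and its differential is nondegenerate at interior points because the initial tangent vectors $l'(0)\cos s + Jl'(0)\sin s$ are distinct. No point at distance less than $\pi/\sqrt{2}$ from $p_1$ is conjugate to $p_1$ (otherwise the diameter bound would be strictly smaller), so uniqueness of minimizing geodesics from $p_1$ gives injectivity on the interior, and $\Phi$ embeds a topological $S^2$.

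The heart of the proof is Jacobi-field rigidity along each $\tau_s$. The variation field $V_s(t) := \partial_{s'}\tau_{s'}(t)|_{s'=s}$ is a Jacobi field with $V_s(0) = 0$, $V_s'(0) = J\dot\tau_s(0)$, and $V_s(\pi/\sqrt{2}) = 0$. Since $\dot\tau_s$ is parallel along $\tau_s$ and $\nabla J = 0$, the field $J\dot\tau_s(t)$ is also parallel, so I decompose $V_s(t) = f(t)\,J\dot\tau_s(t) + W(t)$ with $W(t) \perp \mathrm{span}(\dot\tau_s, J\dot\tau_s)$. Specializing $BK \geq 1$ to $Z_1 = Z_2$ yields $K_H(t) \geq 2$ for the holomorphic sectional curvature of $\mathrm{span}(\dot\tau_s, J\dot\tau_s)$. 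Applying the second-variation inequality
\begin{equation*}
\int_0^{\pi/\sqrt{2}} \bigl((f')^2 - K_H\,f^2\bigr)\,dt \geq 0
\end{equation*}
to the trial variation $fJ\dot\tau_s$ with $f(t) = \sin(\sqrt{2}\,t)/\sqrt{2}$ saturates the inequality (since $\int_0^{\pi/\sqrt{2}}\cos(2\sqrt{2}\,t)\,dt = 0$), forcing $K_H \equiv 2$ along $\tau_s$. The equality case of $BK \geq 1$ at the attained minimum $Z_1 = Z_2 = \dot\tau_s^{(1,0)}$ further forces $R(J\dot\tau_s, \dot\tau_s)\dot\tau_s = 2\,J\dot\tau_s$ with no component in the $J$-invariant complement. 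This decouples the Jacobi equation, leaving a homogeneous linear ODE for $W$ with trivial initial data $W(0) = W'(0) = 0$; hence $W \equiv 0$, and $V_s(t) = \frac{\sin(\sqrt{2}\,t)}{\sqrt{2}}\,J\dot\tau_s(t)$.

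The remaining conclusions then fall out cleanly. At each interior $x = \tau_s(t)$, $T_x S = \mathrm{span}(\dot\tau_s, J\dot\tau_s)$ is $J$-invariant, so $S$ is a complex submanifold. The induced metric is
\begin{equation*}
\Phi^* g = dt^2 + \tfrac{\sin^2(\sqrt{2}\,t)}{2}\,ds^2,
\end{equation*}
which under $r = \sqrt{2}\,t$ is visibly $\tfrac{1}{2}$ times the round metric on $S^2$. Finally, the second fundamental form of a complex submanifold of a K\"ahler manifold satisfies the identity $II(JX, Y) = J\,II(X, Y)$ (since the normal bundle is $J$-invariant); combined with $II(\dot\tau_s, \dot\tau_s) = 0$ (as $\tau_s$ is a geodesic in $M$) and bilinearity over $T_x S = \mathbb{R}\dot\tau_s \oplus \mathbb{R}J\dot\tau_s$, this forces $II \equiv 0$, so $S$ is totally geodesic. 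The main obstacle is extracting both pieces of rigidity --- $K_H \equiv 2$ pointwise and $W \equiv 0$ --- from the single scalar BK bound and its equality case; once that is in hand, the holomorphic, round-metric, and totally-geodesic conclusions are essentially K\"ahler linear algebra.
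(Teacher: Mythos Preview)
Your argument follows essentially the same route as the paper's: second variation with the test field $\sin(\sqrt{2}t)J\dot\tau_s$ forces $K_H\equiv 2$, a curvature orthogonality condition then decouples the Jacobi equation, and the explicit Jacobi field $\frac{1}{\sqrt{2}}\sin(\sqrt{2}t)J\dot\tau_s$ yields the holomorphic, round, totally geodesic conclusions. The differences are cosmetic. For the orthogonality $R(J\dot\tau_s,\dot\tau_s)\dot\tau_s\in\mathrm{span}\{J\dot\tau_s\}$ you invoke ``the equality case of $BK\geq 1$,'' whereas the paper proves this (Claim~2) by perturbing the index form with $Z_\lambda=\sin(\sqrt{2}t)JX+\lambda\xi(t)Z$ and reading off the first-order condition; your version is the same first-order argument carried out in the bisectional-curvature ratio rather than in $I(\cdot)$, and it does work, but you should spell out that varying $Z_2=Z+\epsilon W$ with $W\perp Z$ kills the first-order term of the denominator and hence forces $R(Z,\bar Z,W,\bar Z)=0$. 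For totally geodesic you use the K\"ahler identity $II(JX,Y)=J\,II(X,Y)$, which is slicker than the paper's direct check that $\nabla_XX,\nabla_XY,\nabla_YX,\nabla_YY$ all lie in $\mathrm{span}\{X,Y\}$.

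One correction: your parenthetical ``otherwise the diameter bound would be strictly smaller'' is false --- a conjugate point along some geodesic from $p_1$ has no bearing on the diameter. What you actually need (and what is true) is that each $\tau_s$ is minimizing all the way to $p_2$, so no interior point of $\tau_s$ lies in the cut locus of $p_1$; hence if $\tau_{s_1}(t)=\tau_{s_2}(t)$ the two minimizing segments to that point must coincide, forcing $s_1=s_2$. The paper phrases this via the triangle inequality, but your cut-locus version is equivalent once stated correctly.
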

\begin{proof}
It is clear that the length of $\tau_s$ is constant. Let $X = \frac{\partial}{\partial t}\tau_s(t), Y = \frac{\partial}{\partial s}\tau_s(t)$ for $0\leq t\leq \frac{\pi}{\sqrt{2}}$. Then $Y$ is a Jacobi field with initial condition \begin{equation}\label{-7}Y(0) = 0, Y'(0) = JX.\end{equation} By the second variation of arc length,  for any vector field $Z$ orthogonal to $X$ along $\tau_s$ and vanishing at $p_1$ and $p_2$, \begin{equation}\label{eq0}
0\leq\int_0^{\frac{\pi}{\sqrt{2}}}|\nabla_XZ|^2-R(Z, X, X, Z)dt =: I(Z).\end{equation}
If we take $Z = \sin(\sqrt{2}t)JX$, then by $BK\geq 1$, \begin{equation}\label{-9}R(X, JX, JX, X) = 2\end{equation} along $\tau_s$. Thus \begin{equation}\label{-10}
I(\sin(\sqrt{2}t)JX)=0.\end{equation}

\begin{claim}\label{cl2}
$R(JX, X, X, Z) = 0$ for any $Z$ orthogonal to $JX$ and $X$. Equivalently, $R(Z, X)X\in$ span$\{X, JX\}^\perp$ and $R(JX, X)X\in$ span$\{JX\}$.
\end{claim}
\begin{proof}
Assume the claim is not true. Say at some $x = \tau_{s_0}(t_0)$, for some tangent vector $Z\in T_xM$, \begin{equation}\label{eq1}R(JX, X, X, Z) >0, Z\perp JX, Z\perp X.\end{equation}  It is clear that we can find $Z$ satisfying (\ref{eq1}) in a neighborhood of $x$. Say for $0<t_1<t<t_2<\frac{\pi}{\sqrt{2}}$, $s= s_0$. Thus without loss of generality, we may assume that $0<t_0<\frac{\pi}{\sqrt{2}}$.
Let us consider a cut-off function $\xi$ satisfying $\xi\geq 0$ on $[0, \frac{\pi}{\sqrt{2}}]$, and $\xi$ has compact support in $(t_1, t_2)$. Moreover, $\xi = 1$ at $t_0$. For any $\lambda\geq 0$, consider the vector field $Z_\lambda(t) = \xi(t)\lambda Z+\sin({\sqrt{2}t})JX$. Let us plug $Z_\lambda$ in (\ref{eq0}). According to (\ref{eq0}) and (\ref{-10}), $I(Z_\lambda)\geq 0$ and $I(Z_0) = 0$.
Thus \begin{equation}\frac{d}{d\lambda}|_{\lambda = 0}I(Z_\lambda) \geq 0.\end{equation}
However, by direct calculation,
\begin{equation}\frac{d}{d\lambda}|_{\lambda = 0}I(Z_\lambda) =\int_0^{\frac{\pi}{\sqrt{2}}}-2\xi(t)\sin({\sqrt{2}t})R(JX, X, X, Z)dt<0.\end{equation}
This is a contradiction. 
\end{proof}

\begin{lemma}\label{lm1}
$Y = \frac{1}{\sqrt{2}}\sin(\sqrt{2}t)JX$ on $\tau_s$. Therefore $S$ is smooth at $p_2$. $S$ is an immersed holomorphic sphere in $M$.
\end{lemma}
\begin{proof}
Set $Y = Y_1+Y_2$, where $Y_1$ is parallel to $JX$ and $Y_2$ is orthogonal to $JX$ and $X$. $Y$ satisfies the Jacobi field equation \begin{equation}\nabla_X\nabla_XY = -R(Y, X)X.\end{equation}
Let us rewrite it as \begin{equation}\nabla_X\nabla_XY_1+\nabla_X\nabla_XY_2 = -R(Y_1, X)X-R(Y_2, X)X.\end{equation} Observe that $\nabla_X\nabla_XY_1\in$ span$\{JX\}$ and $\nabla_X\nabla_XY_2\in$ span$\{X, JX\}^\perp$.
With the help of claim \ref{cl2}, we find \begin{equation}\label{-11}\nabla_X\nabla_XY_1 = -R(Y_1, X)X, \end{equation} \begin{equation}\label{-12}\nabla_X\nabla_XY_2 = -R(Y_2, X)X. \end{equation}
Notice that $Y_1(0) = 0, Y'_1(0) =JX$. With the help of (\ref{-9}) and (\ref{-11}), we find $Y_1 = \frac{1}{\sqrt{2}}\sin(\sqrt{2}t)JX$.
Also note $Y_2(0) = 0$, $Y'_2(0) = 0$. Then from (\ref{-12}) and the uniqueness of ode, we find $Y_2 \equiv 0$.
 The proof of lemma \ref{lm1} is complete.

\end{proof}

Lemma \ref{lm1} indicates a holomorphic isometry from the rescaled standard sphere to $S$.
Next we prove $S$ is embedded. Suppose $\tau_{s_1}t_1 = \tau_{s_2}t_2$. As $d(p_1, \tau_s(t)) = t$, $t_1=t_2$. We may assume $0<t_1<\frac{\pi}{\sqrt{2}}$. If $X_{s_1}t_1 \neq X_{s_2}t_2$, by standard triangle inequality, we see that $\tau_{s_1}$ cannot be a minimizing geodesic connecting $p_1$ and $p_2$. Therefore, by the uniqueness of geodesic, $\tau_{s_1}$ is the same as $\tau_{s_2}$. By checking the initial tangent vector at $p_1$, we find $s_1=s_2$ modulo $2\pi$. Now we prove that $S$ is totally geodesic. It is clear that $\nabla_XX, \nabla_XY\in$ span$\{X, Y\}$, $\nabla_YX = \nabla_XY+[Y, X]\in$span$\{X, Y\}$ and $\nabla_YY = J\nabla_Y(\frac{1}{\sqrt{2}}\sin(\sqrt{2}t)X)\in$span$\{X, Y\}$. This completes the proof of proposition \ref{prop1}.

\end{proof}

According to Mori \cite{Mor} and Siu-Yau \cite{SY} solution to the Frankel conjecture, $M$ is biholomorphic to $\mathbb{C}\mathbb{P}^n$. Proposition \ref{prop1} says $S$ is an embedded holomorphic sphere. Let us assume the degree of $S$ is $d$ for some integer $d\geq 1$. Then $Vol(M)=\frac{vol({\mathbb{CP}^n})}{d^n}$. If $d = 1$, from the volume rigidity result in \cite{LW}, $M$ is isometric to $\mathbb{CP}^n$.

\begin{remark}
To prove $d=1$, one may estimate the integration of the Ricci form on $S$. However, there are some difficulties when the points are near $p_1$ or $p_2$.
\end{remark}

\end{document}